\definecolor{webgreen}{rgb}{0,.5,0}
\definecolor{webbrown}{rgb}{.6,0,0}
\newcommand{\seqnum}[1]{\href{https://oeis.org/#1}{\rm \underline{#1}}}
\begin{document}

\theoremstyle{plain}
\newtheorem{theorem}{Theorem}
\newtheorem{corollary}[theorem]{Corollary}
\newtheorem{lemma}[theorem]{Lemma}
\newtheorem{proposition}[theorem]{Proposition}

\theoremstyle{definition}
\newtheorem{definition}[theorem]{Definition}
\newtheorem{example}[theorem]{Example}
\newtheorem{conjecture}[theorem]{Conjecture}

\theoremstyle{remark}
\newtheorem{remark}[theorem]{Remark}

\begin{center}
\vskip 1cm{\LARGE\bf A New Identity Linking Bernoulli Numbers, Stirling Numbers of the First Kind, and Bessel Numbers of the First Kind} \vskip 1cm \large
Abdelhay Benmoussa\\
Primary School Teacher\\
S S Ansis\\
Agadir Ida Outanane 80000\\
Morocco\\
\href{mailto:abdelhay.benmoussa@taalim.ma}{\tt abdelhay.benmoussa@taalim.ma}
\end{center}
\vskip .2 in

\begin{flushright}
\textit{To my family.}
\end{flushright}
\begin{abstract}
We establish a new identity linking Bernoulli, Stirling (first kind), and Bessel (first kind) numbers:
\[
\sum_{k=0}^{n} 2^{\,n-k}\,s(n,k)\,B_k \;=\; \sum_{k=0}^{n} b(n,k)\,\frac{(-1)^k\,k!}{k+1}.
\]
This parallels the classical Stirling--Bernoulli relation
\[
B_n = \sum_{k=0}^{n} S(n,k)\,\frac{(-1)^k\,k!}{k+1},
\]
replacing $S(n,k)$ with $s(n,k)$ and $b(n,k)$, and thus revealing a new structural connection among these families of numbers.
\end{abstract}

\section{Introduction}
A classical identity expresses Bernoulli numbers via Stirling numbers of the second kind:
\begin{equation}\label{eq:classical}
B_n = \sum_{k=0}^{n} S(n,k)\,\frac{(-1)^k\,k!}{k+1}.
\end{equation}
In this paper we present a close analogue involving Stirling numbers of the first kind and Bessel numbers of the first kind:
\begin{equation}\label{eq:ours}
\sum_{k=0}^{n} 2^{\,n-k}\,s(n,k)\,B_k
= \sum_{k=0}^{n} b(n,k)\,\frac{(-1)^k\,k!}{k+1}.
\end{equation}
Both identities feature the same factor $\frac{(-1)^k k!}{k+1}$ on the right, while the left involves either a single Bernoulli number $B_n$ or a Stirling-first-kind convolution of Bernoulli numbers. 
This striking formal parallel hints at a deeper ``first-kind/second-kind'' duality between the two identities.

\section{Notation and preliminaries}

\paragraph{Bernoulli numbers.}
The Bernoulli numbers $B_n$ are defined via the generating function
\[
\frac{t}{e^t-1}=\sum_{n=0}^{\infty} B_n \frac{t^n}{n!},\qquad \text{for all } |t|<2\pi.
\]

\paragraph{Stirling numbers of the first kind.}
We denote by $s(n,k)$ the Stirling numbers of the first kind, characterized by the falling factorial expansion \cite{Comtet}
\begin{equation}\label{eq:falling}
(x)_n := x(x-1)\cdots(x-n+1) \;=\; \sum_{k=0}^{n} s(n,k)\,x^k,
\qquad s(n,1)=(-1)^{n-1}(n-1)!.
\end{equation}

\paragraph{Bessel numbers of the first kind.}
We adopt the explicit definition \cite{han}
\begin{equation}\label{eq:b-explicit}
b(n,k) \;=\;
\begin{cases}
(-1)^{n-k}\,\dfrac{(2n-k-1)!}{2^{\,n-k}\,(k-1)!\,(n-k)!}, & \text{if } 1\le k\le n,\\[2mm]
0, & \text{if } 0\le n<k.
\end{cases}
\end{equation}
Equivalently, the $b(n,k)$ appear as coefficients in the operational identity 
\begin{equation}\label{eq:oper}
(x^{-1}D)^n \;=\; \sum_{k=0}^{n} b(n,k)\,x^{\,k-2n}\,D^k,
\qquad D=\frac{d}{dx}.
\end{equation}
The sequence $b(n,k)$ is cataloged as OEIS \seqnum{A122850}.

\section{Main result}

\begin{lemma}\label{lem:quad-prod-to-power} For every $n\ge0$, \begin{equation}\label{eq:quad-to-power} \prod_{j=0}^{n-1}\bigl(t-2j\bigr) \;=\; \sum_{k=0}^{n} 2^{\,n-k}\,s(n,k)\,t^k. \end{equation} \end{lemma}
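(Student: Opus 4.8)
The plan is to reduce the claimed identity to the defining falling-factorial expansion \eqref{eq:falling} by a simple change of variable. Observe that the left-hand product $\prod_{j=0}^{n-1}(t-2j)$ is, up to scaling, a falling factorial: if we substitute $x = t/2$, then $t - 2j = 2(x-j)$, so that $\prod_{j=0}^{n-1}(t-2j) = 2^n \prod_{j=0}^{n-1}(x-j) = 2^n (x)_n$. The key step is then to invoke \eqref{eq:falling} with this $x$, giving $2^n(x)_n = 2^n \sum_{k=0}^n s(n,k)\,x^k = 2^n \sum_{k=0}^n s(n,k)\,(t/2)^k = \sum_{k=0}^n 2^{\,n-k}\,s(n,k)\,t^k$, which is exactly the right-hand side.

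In more detail, I would carry this out in three short steps. First, record the factorization $t-2j = 2\bigl(\tfrac{t}{2}-j\bigr)$ for each $j$, and multiply over $j=0,\dots,n-1$ to get $\prod_{j=0}^{n-1}(t-2j) = 2^{\,n}\,\bigl(\tfrac{t}{2}\bigr)_n$, where $(x)_n$ is the falling factorial of \eqref{eq:falling}; for $n=0$ both sides are the empty product $1$, so the base case is immediate. Second, apply \eqref{eq:falling} at $x = t/2$ to expand $\bigl(\tfrac{t}{2}\bigr)_n = \sum_{k=0}^{n} s(n,k)\,(t/2)^k$. Third, substitute this back and simplify $2^{\,n}\cdot 2^{-k} = 2^{\,n-k}$ inside the sum to land on \eqref{eq:quad-to-power}.

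Since \eqref{eq:falling} is an identity of polynomials in $x$, it holds for the indeterminate $x=t/2$ as well, so there is no analytic subtlety; alternatively one can view everything as a polynomial identity in $t$ and note both sides are monic of degree $n$ with the prescribed roots $0,2,4,\dots,2(n-1)$, but the substitution argument is cleaner and makes the coefficient bookkeeping transparent. Honestly, there is no real obstacle here: the only thing to be careful about is the exponent arithmetic $2^{\,n}\cdot(1/2)^k = 2^{\,n-k}$ and the degenerate case $n=0$, both of which are routine. I would therefore present this as a one-paragraph proof, emphasizing the dilation $t\mapsto t/2$ that converts the ``even-spaced'' product into the standard falling factorial governed by the Stirling numbers of the first kind.
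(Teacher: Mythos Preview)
Your proposal is correct and is essentially identical to the paper's own proof: both factor out $2$ from each term to rewrite the product as $2^{\,n}(t/2)_n$, then apply the falling-factorial expansion \eqref{eq:falling} at $x=t/2$ and simplify $2^n\cdot 2^{-k}=2^{\,n-k}$. The paper presents this as a one-line chain of equalities, while you spell out the steps and the $n=0$ base case, but the substance is the same.
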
 \begin{proof} Write \[ \prod_{j=0}^{n-1} (t-2j) = 2^n \prod_{j=0}^{n-1}\!\left(\frac{t}{2}-j\right) = 2^n \left(\frac{t}{2}\right)_{n} = \sum_{k=0}^{n} 2^{n-k}\,s(n,k)\,t^k, \] which proves the lemma. \end{proof}

\begin{theorem}\label{thm:main}
For every integer $n\ge0$, the following identity holds:

\begin{equation}
\sum_{k=0}^{n} 2^{\,n-k}\,s(n,k)\,B_k=\sum_{k=0}^{n} b(n,k)\,\frac{(-1)^k\,k!}{k+1}, 
\label{eq:main1}
\end{equation}
\end{theorem}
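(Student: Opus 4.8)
The plan is to prove \eqref{eq:main1} by the method of exponential generating functions: I will show that \emph{both} sides of \eqref{eq:main1}, weighted by $t^n/n!$ and summed over $n\ge 0$, equal the same formal power series
\[
\Phi(t)\;:=\;\frac{\tfrac12\log(1+2t)}{\sqrt{1+2t}-1}.
\]
Two generating-function inputs drive the argument. First, the classical EGF for signed Stirling numbers of the first kind, $\sum_{n\ge k}s(n,k)\,t^n/n!=\tfrac1{k!}\bigl(\log(1+t)\bigr)^{k}$, which follows immediately from $(1+t)^x=\sum_n\binom{x}{n}t^n$ together with the falling-factorial expansion \eqref{eq:falling}. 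Second, the analogous EGF for Bessel numbers of the first kind,
\[
\sum_{n\ge k} b(n,k)\,\frac{t^n}{n!}\;=\;\frac1{k!}\bigl(\sqrt{1+2t}-1\bigr)^{k},
\]
which one can quote from the literature (\cite{han}) or verify from the explicit formula \eqref{eq:b-explicit} by a short Lagrange--Bürmann computation applied to the compositional inverse $s\mapsto s+\tfrac{s^2}{2}$ of $t\mapsto\sqrt{1+2t}-1$.

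For the left-hand side I would use Lemma~\ref{lem:quad-prod-to-power}: passing from $s(n,k)$ to $2^{\,n-k}s(n,k)$ amounts to the substitution $t\mapsto 2t$ in the Stirling EGF, so $\sum_{n\ge k}2^{\,n-k}s(n,k)\,t^n/n!=\tfrac1{k!}\bigl(\tfrac12\log(1+2t)\bigr)^{k}$. Summing against $B_k$ and interchanging the (finite inner) sums,
\[
\sum_{n\ge0}\Bigl(\sum_{k=0}^{n}2^{\,n-k}s(n,k)\,B_k\Bigr)\frac{t^n}{n!}
\;=\;\sum_{k\ge0}\frac{B_k}{k!}\,u^{k}\;=\;\frac{u}{e^{u}-1},
\qquad u:=\tfrac12\log(1+2t),
\]
by the defining generating function of the Bernoulli numbers. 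Since $e^{u}=(1+2t)^{1/2}=\sqrt{1+2t}$, the right side is exactly $\Phi(t)$.

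For the right-hand side I would sum $b(n,k)\,\tfrac{(-1)^k k!}{k+1}$ against the Bessel EGF; the factors $k!$ cancel, and with $w:=\sqrt{1+2t}-1$,
\[
\sum_{n\ge0}\Bigl(\sum_{k=0}^{n}b(n,k)\,\frac{(-1)^k k!}{k+1}\Bigr)\frac{t^n}{n!}
\;=\;\sum_{k\ge0}\frac{(-1)^k}{k+1}\,w^{k}\;=\;\frac{\log(1+w)}{w},
\]
using the elementary identity $\sum_{k\ge0}\tfrac{(-1)^k}{k+1}w^{k}=\tfrac{\log(1+w)}{w}$. Because $1+w=\sqrt{1+2t}$, this is again $\Phi(t)$. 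Comparing coefficients of $t^n/n!$ then yields \eqref{eq:main1}; all compositions ($\log(1+\cdot)$, $\sqrt{1+\cdot}$, the Bernoulli series) are legitimate since the inner series have zero constant term, the sum interchanges are finite in each degree, and the case $n=0$ reduces to $B_0=b(0,0)=s(0,0)=1$.

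I expect the only non-mechanical step to be establishing the Bessel-first-kind EGF $\sum_{n}b(n,k)t^n/n!=\tfrac1{k!}(\sqrt{1+2t}-1)^k$; everything after that is bookkeeping. It is worth recording the conceptual payoff of this route: running the \emph{same} computation with $w=e^t-1$ in place of $w=\sqrt{1+2t}-1$ gives $\tfrac{\log(1+w)}{w}=\tfrac{t}{e^t-1}$ and hence the classical identity \eqref{eq:classical}, so \eqref{eq:main1} is precisely its ``first-kind'' counterpart, with the inverse pair $\bigl(\log(1+t),\,e^{t}-1\bigr)$ replaced by the inverse pair $\bigl(\sqrt{1+2t}-1,\,t+\tfrac{t^2}{2}\bigr)$; the extra Bernoulli convolution on the left of \eqref{eq:main1} is exactly the $2$-scaling contributed by Lemma~\ref{lem:quad-prod-to-power} (equivalently, $e^{u}=\sqrt{1+2t}$ with $u=\tfrac12\log(1+2t)$ rather than $u=t$).
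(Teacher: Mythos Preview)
Your proof is correct and takes a genuinely different route from the paper's. The paper works at the level of a single polynomial identity: it applies the operational identity \eqref{eq:oper} to $x^{t}$ to obtain $\prod_{j=0}^{n-1}(t-2j)=\sum_{k} b(n,k)\,(t)_k$, combines this with Lemma~\ref{lem:quad-prod-to-power} to get $\sum_{k} 2^{\,n-k}s(n,k)\,t^{k}=\sum_{k} b(n,k)\,(t)_k$, and then extracts \eqref{eq:main1} by summing over $t=0,\dots,m-1$, invoking Faulhaber's formula on the monomial side and the telescoping sum $\sum_{t=0}^{m-1}(t)_k=(m)_{k+1}/(k+1)$ on the falling-factorial side, and finally equating the coefficients of $m^{1}$. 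In effect the paper applies the linear functional $t^{k}\mapsto B_k$, $(t)_k\mapsto(-1)^{k}k!/(k+1)$ to one polynomial identity in $t$. Your EGF approach instead packages all $n$ simultaneously and makes the parallel with \eqref{eq:classical} transparent through the compositional-inverse pair $\bigl(\sqrt{1+2t}-1,\;t+t^{2}/2\bigr)$ replacing $\bigl(e^{t}-1,\;\log(1+t)\bigr)$. The trade-off is that you must know (or separately establish) the Bessel-first-kind EGF $\sum_{n\ge k} b(n,k)\,t^{n}/n!=\tfrac{1}{k!}\bigl(\sqrt{1+2t}-1\bigr)^{k}$, whereas the paper relies only on the elementary operator identity \eqref{eq:oper} and uses no generating functions at all. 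Both arguments are short; the paper's is more self-contained, yours more structurally revealing.
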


\begin{proof}
Applying \eqref{eq:oper} to the monomial $x^t$ gives
\[
t(t-2)(t-4)\cdots\bigl(t-2(n-1)\bigr)
=\sum_{k=0}^{n} b(n,k)\,(t)_k,
\]
and by Lemma~\ref{lem:quad-prod-to-power} we also have
\[
\prod_{j=0}^{n-1}(t-2j)
=\sum_{k=0}^{n} 2^{\,n-k}\,s(n,k)\,t^k.
\]
Therefore,
\begin{equation}\label{eq:two-exp}
\sum_{k=0}^{n} 2^{\,n-k}\,s(n,k)\,t^k
=\sum_{k=0}^{n} b(n,k)\,(t)_k.
\end{equation}

Summing \eqref{eq:two-exp} over $t=0,1,\dots,m-1$ gives
\[
\sum_{k=0}^{n} 2^{\,n-k}\,s(n,k)\,\sum_{t=0}^{m-1} t^k
=\sum_{k=0}^{n} b(n,k)\,\sum_{t=0}^{m-1}(t)_k.
\]

On the left-hand side, Faulhaber’s formula yields
\[
\sum_{t=0}^{m-1} t^k
=\frac{1}{k+1}\sum_{j=0}^{k} \binom{k+1}{j} B_j\,m^{\,k+1-j},
\]
so the coefficient of $m^1$ is exactly $B_k$.  
On the right-hand side, the telescoping identity $\Delta (t)_{k+1}=(k+1)(t)_k$ gives
\[
\sum_{t=0}^{m-1}(t)_k=\frac{(m)_{k+1}}{k+1}.
\]
Expanding $(m)_{k+1}$ into powers of $m$ using Stirling numbers of the first kind shows that the coefficient of $m^1$ is $(-1)^k k!$.  

Equating the coefficients of $m^1$ on both sides proves
\[
\sum_{k=0}^{n} 2^{\,n-k}\,s(n,k)\,B_k
=\sum_{k=0}^{n} b(n,k)\,\frac{(-1)^k k!}{k+1},
\]
which is \eqref{eq:main1}.

\end{proof}

\begin{corollary}\label{cor:main}
For integers $n\ge0$ and $0\le i\le n$,

\begin{equation}\label{eq:cor}
\sum_{k=i}^{n} 2^{\,n-k}\,s(n,k)\,S(k,i)
\;=\;
b(n,i),
\end{equation}
Here $S(k,i)$ denotes a Stirling number of the second kind.

\end{corollary}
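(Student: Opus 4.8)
The plan is to read the corollary straight off equation~\eqref{eq:two-exp}, which already asserts an equality of two polynomials in $t$. The single extra ingredient needed is the standard change of basis between ordinary powers and falling factorials via Stirling numbers of the second kind, namely the polynomial identity $t^k=\sum_{i=0}^{k}S(k,i)\,(t)_i$, valid for every $k\ge0$.

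First I would substitute this expansion into the left-hand side of~\eqref{eq:two-exp}:
\[
\sum_{k=0}^{n}2^{\,n-k}\,s(n,k)\,t^k
=\sum_{k=0}^{n}2^{\,n-k}\,s(n,k)\sum_{i=0}^{k}S(k,i)\,(t)_i .
\]
Then I would swap the order of summation — legitimate because the double sum is finite and both $s(n,k)$ and $S(k,i)$ vanish outside $0\le i\le k\le n$ — and collect the coefficient of each $(t)_i$, obtaining
\[
\sum_{i=0}^{n}\Bigl(\,\sum_{k=i}^{n}2^{\,n-k}\,s(n,k)\,S(k,i)\Bigr)(t)_i .
\]
Comparing this with the right-hand side of~\eqref{eq:two-exp}, which is $\sum_{i=0}^{n}b(n,i)\,(t)_i$, and invoking the linear independence of $(t)_0,(t)_1,\dots,(t)_n$ over $\mathbb{Q}$ (they form a basis of the polynomials of degree at most $n$), I would equate the coefficient of $(t)_i$ on the two sides for each $i$, which is precisely~\eqref{eq:cor}.

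There is no genuine obstacle here: once powers are re-expressed in the falling-factorial basis, the corollary is immediate from~\eqref{eq:two-exp}, and the only point requiring care is the bookkeeping in the reindexing and the observation that the reindexed sum really is finite. As a consistency check one can verify the endpoints $i=n$ (both sides equal $1$, using $s(n,n)=S(n,n)=b(n,n)=1$) and $i=0$ with $n\ge1$ (both sides vanish, since $s(n,0)=0$ and $b(n,0)=0$).
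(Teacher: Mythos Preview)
Your proof is correct and takes a more direct route than the paper's. The paper derives the corollary from Theorem~\ref{thm:main}: it substitutes the classical expansion $B_k=\sum_{i}S(k,i)\,\frac{(-1)^i i!}{i+1}$ into the left-hand side of~\eqref{eq:main1}, interchanges summations, and then matches the result against $\sum_{i}b(n,i)\,\frac{(-1)^i i!}{i+1}$ by appealing to the ``independence'' of the weights $\frac{(-1)^i i!}{i+1}$ (with a citation to~\cite{Stenlund}). That last step is delicate, since for each fixed $n$ one is comparing two scalar sums rather than two polynomials, so the term-by-term identification is not immediate from the argument as written.

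You instead go back to the polynomial identity~\eqref{eq:two-exp} and perform the change of basis $t^k=\sum_{i}S(k,i)\,(t)_i$. Because $(t)_0,(t)_1,\dots,(t)_n$ are polynomials of distinct degrees and hence genuinely linearly independent, equating coefficients is fully rigorous. This bypasses Bernoulli numbers and Theorem~\ref{thm:main} altogether, obtaining~\eqref{eq:cor} straight from Lemma~\ref{lem:quad-prod-to-power} and the operational identity~\eqref{eq:oper}; it is both shorter and on firmer ground than the paper's argument.
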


\begin{proof}
Recall the well-known expansion of Bernoulli numbers in terms of Stirling numbers of the second kind:
\[
B_k \;=\; \sum_{i=0}^{k} S(k,i)\,\frac{(-1)^i\,i!}{i+1}.
\]
Substitute this into the left-hand side of Identity~\eqref{eq:main1}:
\[
\sum_{k=0}^{n} 2^{\,n-k}\,s(n,k)\,B_k
= \sum_{k=0}^{n} 2^{\,n-k}\,s(n,k)
   \sum_{i=0}^{k} S(k,i)\,\frac{(-1)^i\,i!}{i+1}.
\]
Interchanging summations yields
\[
\sum_{i=0}^{n} \frac{(-1)^i\,i!}{i+1}
   \sum_{k=i}^{n} 2^{\,n-k}\,s(n,k)\,S(k,i).
\]
By Theorem~\ref{thm:main}, this equals
\[
\sum_{i=0}^{n} b(n,i)\,\frac{(-1)^i\,i!}{i+1}.
\]
Since the factors $\frac{(-1)^i i!}{i+1}$ are independent, identity \eqref{eq:cor} follows \cite{Stenlund}.

This completes the proof.
\end{proof}

\section*{Acknowledgment}
The author originally discussed this identity and its proof on MathOverflow.

\bigskip
\hrule
\bigskip
\noindent 2020 \textit{Mathematics Subject Classification}: Primary 11B68, 05A18, 11B73, 33C10.

\noindent\emph{Keywords:} Bernoulli number, Stirling number, Bessel number of first kind, operational identity.

\bigskip
\hrule
\bigskip
\noindent (Concerned with sequences
\seqnum{A008275}, \seqnum{A008277}, \seqnum{A122850}.)
\bigskip
\hrule
\bigskip
\vspace*{+.1in}
\noindent
Received ; revised versions received ; . 
Published in \textit{Journal of Integer Sequences}, .
\bigskip
\hrule
\bigskip
\noindent
Return to \href{https://cs.uwaterloo.ca/journals/JIS/}{Journal of Integer Sequences home page}.
\vskip .1in
\end{document}